\theoremstyle{theorem}
\newtheorem{theorem}{Theorem}
\newtheorem{lemma}{Lemma}
\newtheorem{corollary}{Corollary}
\theoremstyle{definition}
\newtheorem{application}{Application}
\begin{document}

\begin{frontmatter}



\title{Relations between ranks of matrix polynomials}

\author{Vasile Pop}
\ead{vasile.pop@math.utcluj.ro}
\address{Technical University of Cluj-Napoca,  Str. C. Daicoviciu 15, 400020, Cluj-Napoca, Romania}

\cortext[cor1]{Corresponding author} 
\begin{abstract}
We show that the sum of ranks of two matrix polynomials is the same as the sum of the rank of the matrix obtained by applying the greatest common divisor of the
polynomials, with the rank of the matrix obtained  by applying the lowest common multiple of the polynomials. Many applications, for older or more recent problems, of this result are obtained.
\end{abstract}

\begin{keyword}
rank, matrix, polynomial

\MSC 15A24
\end{keyword}

\end{frontmatter}


\section{Introduction.}\label{sec1}
Let $(K,+,\cdot)$ be a field, let $K[X]$ be the ring of polynomials with
coefficients in $K$ and $\mathcal{M}_{n}(K)$ the ring of square matrices of
order $n\ge2$ with coefficients in $K$. Let $D:=(f,g)$ denote the greatest
common divisor and $M:=[f,g]$ denote the lowest common multiple of the
polynomials $f,g$.
The main result of this paper is  following theorem.

\begin{theorem}[Sum of ranks of matrix polynomials]\label{thm1}
For any two polynomials $f,g\in K[X]$ and for any
matrix $A\in\mathcal{M}_{n}(K)$ the following relation holds
\[
\mathrm{rank}\, f(A)+\mathrm{rank}\, g(A)=\mathrm{rank}\, D(A)+\mathrm{rank}\,
M(A).
\]
\end{theorem}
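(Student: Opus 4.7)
The plan is to reduce the rank identity to the standard dimension formula for subspaces $\dim(U+V) + \dim(U\cap V) = \dim U + \dim V$ applied to the kernels of $f(A)$ and $g(A)$. Since $\mathrm{rank}\, p(A) = n - \dim \ker p(A)$ for any $p \in K[X]$, it suffices to prove the dual identity
\[
\dim \ker D(A) + \dim \ker M(A) = \dim \ker f(A) + \dim \ker g(A),
\]
and to do this I will identify $\ker D(A)$ with $\ker f(A) \cap \ker g(A)$ and $\ker M(A)$ with $\ker f(A) + \ker g(A)$.

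For the first identification, the inclusion $\ker D(A) \subseteq \ker f(A) \cap \ker g(A)$ is immediate from $D \mid f$ and $D \mid g$. The reverse inclusion is where the B\'ezout identity enters: writing $D = uf + vg$ for suitable $u,v \in K[X]$ gives $D(A) = u(A)f(A) + v(A)g(A)$, so any vector annihilated by both $f(A)$ and $g(A)$ is annihilated by $D(A)$.

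The harder direction is $\ker M(A) \subseteq \ker f(A) + \ker g(A)$, which I expect to be the main obstacle. The easy inclusion follows from $f \mid M$ and $g \mid M$. For the nontrivial one, I factor $f = D f_1$ and $g = D g_1$ with $\gcd(f_1, g_1) = 1$, so that $M = D f_1 g_1$. Applying B\'ezout to the coprime pair $(f_1, g_1)$ yields $u, v \in K[X]$ with $u f_1 + v g_1 = 1$, whence for any $w \in \ker M(A)$ I decompose
\[
w = v(A) g_1(A) w + u(A) f_1(A) w =: w_1 + w_2.
\]
A direct computation shows $f(A) w_1 = v(A) M(A) w = 0$ and $g(A) w_2 = u(A) M(A) w = 0$, proving $w \in \ker f(A) + \ker g(A)$.

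With both kernel identifications in hand, the Grassmann dimension formula applied to $U = \ker f(A)$ and $V = \ker g(A)$ yields the desired equality of nullities, and translating back through rank-nullity produces the statement of the theorem. The only subtlety to watch for is ensuring the B\'ezout identities and the relation $fg = DM$ hold with the same normalization (up to a scalar in $K^\times$, which does not affect kernels), but this is routine.
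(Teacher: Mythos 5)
Your proof is correct, but it follows a genuinely different route from the paper. The paper works at the level of block matrices: it starts from $B=\left[\begin{smallmatrix} f(A) & 0\\ 0 & g(A)\end{smallmatrix}\right]$ and, using the B\'ezout identity $f\varphi_1+\varphi_2 g=D$ together with the factorizations $f=D\psi_2$, $g=\psi_1 D$ and $fg=DM$, performs elementary row and column operations (multiplication by unimodular block-triangular matrices) to reach $\left[\begin{smallmatrix} 0 & D(A)\\ -M(A) & 0\end{smallmatrix}\right]$, so the identity falls out of the explicit factorization $C=L_2L_1BC_1C_2$. You instead prove the two structural facts $\ker D(A)=\ker f(A)\cap\ker g(A)$ and $\ker M(A)=\ker f(A)+\ker g(A)$ (the first via B\'ezout for $(f,g)$, the second via B\'ezout for the coprime cofactors $f_1,g_1$ with $f=Df_1$, $g=Dg_1$, $M=Df_1g_1$, and the decomposition $w=v(A)g_1(A)w+u(A)f_1(A)w$), then finish with the Grassmann dimension formula and rank--nullity; your computations $f(A)w_1=v(A)M(A)w=0$ and $g(A)w_2=u(A)M(A)w=0$ are valid since all matrices involved are polynomials in $A$ and commute, and the normalization issue you flag (gcd, lcm and $fg=DM$ only up to units of $K$) is indeed harmless for kernels and ranks. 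What each approach buys: the paper's block-elimination argument is purely matricial, explicit, and in the same spirit as standard proofs of the Sylvester and Frobenius rank inequalities, which fits the paper's applications; your kernel argument is more conceptual and actually yields strictly more information, since the two kernel identities themselves (sum and intersection of the kernels) immediately explain corollaries such as ``$M(A)=0$ iff $\mathrm{rank}\,f(A)+\mathrm{rank}\,g(A)=\mathrm{rank}\,D(A)$'' in terms of $\ker f(A)+\ker g(A)$ filling the whole space, at the cost of relying on rank--nullity and thus on the finite-dimensional vector-space setting (which is all that is needed here). The only cosmetic omission is the degenerate case $f=g=0$, where the cofactor factorization is vacuous, but there the statement is trivial.
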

From the above relation one can obtain other interesting relations, particular
theorems that characterize some matrices as idempotent, tripotent, involutive
matrices and other results which may  be well known for experts. These are exposed in Section \ref{sec3} where we reviewed some particular results obtained in \cite{1}, \cite{2} and \cite{3}.
\section{Proof of Theorem \ref{thm1}.}\label{sec2}
We prove the relation using the method of elementary
transformations in block matrices which appears heavily in \cite{6}. Other sources of methods for matrices theory and applications which we are using are from \cite{4}, \cite{5} and \cite{7}. The main result that we use is the following well-known lemma:
\begin{lemma}If $f,g\in K[X]$ then there are $\varphi_{1}%
,\varphi_{2}\in K[X]$ such that
\begin{equation}\label{eqno(1)}
f\cdot\varphi_{1}+\varphi_{2}\cdot g=D.
\end{equation}

\end{lemma}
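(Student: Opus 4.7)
The plan is to use the Euclidean-domain structure of $K[X]$ via the standard ideal argument. First I would dispose of the trivial case $f=g=0$, in which $D=0$ and the identity holds with $\varphi_1=\varphi_2=0$. Assuming then that $(f,g)\neq(0,0)$, I would consider the set
\[
I=\{f\varphi_{1}+\varphi_{2}g : \varphi_{1},\varphi_{2}\in K[X]\},
\]
which is nonempty and contains nonzero polynomials (take $\varphi_{1}=1,\varphi_{2}=0$ if $f\neq 0$, otherwise swap).

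Next I would pick a nonzero element $d\in I$ of minimum degree and show that $d$ divides both $f$ and $g$. Writing $d=f\alpha_{1}+\alpha_{2}g$ and applying Euclidean division in $K[X]$, $f=dq+r$ with $r=0$ or $\deg r<\deg d$, I observe that
\[
r=f-dq=f(1-\alpha_{1}q)+(-\alpha_{2}q)g
\]
also lies in $I$. By minimality of $\deg d$ this forces $r=0$, so $d\mid f$; the same argument gives $d\mid g$. Since $D$ is the greatest common divisor, $d\mid D$.

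Conversely, because $D\mid f$ and $D\mid g$, the polynomial $D$ divides every element of $I$, in particular $D\mid d$. Hence $d$ and $D$ are associates in $K[X]$: $d=cD$ for some $c\in K\setminus\{0\}$. Dividing the relation $d=f\alpha_{1}+\alpha_{2}g$ by $c$ yields $D=f(c^{-1}\alpha_{1})+(c^{-1}\alpha_{2})g$, and setting $\varphi_{1}:=c^{-1}\alpha_{1}$, $\varphi_{2}:=c^{-1}\alpha_{2}$ finishes the proof.

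The argument is entirely standard; there is no substantive obstacle beyond agreeing on the normalization that identifies $D$ with the monic generator of $I$. An alternative I would consider is the constructive route via the extended Euclidean algorithm applied to $f,g$, back-substituting through the successive remainders until $D$ is expressed as a $K[X]$-combination of $f$ and $g$. That version produces explicit $\varphi_{1},\varphi_{2}$, but is longer to write out; the ideal argument above is the more efficient choice for the present paper.
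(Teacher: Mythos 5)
Your proof is correct: it is the standard Euclidean/ideal-theoretic derivation of the B\'ezout identity in $K[X]$, with the degenerate case $f=g=0$ handled separately and the minimal-degree generator shown to be an associate of $D$. The paper itself offers no proof of this lemma, citing it as well known, so there is nothing to compare against; your argument (or the extended Euclidean algorithm you mention as an alternative) is exactly what a referee would expect to fill that gap.
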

\begin{proof} (\textbf{of Theorem \ref{thm1}.})
For the matrix $A$ the relation (\ref{eqno(1)}) become
\begin{equation}\label{eqno(2)}
f(A)\cdot\varphi_{1}(A)+\varphi_{2}(A)\cdot g(A)=D\left(  A\right)  
\end{equation}

We start with the block matrix $B\in\mathcal{M}_{2n}(K)$
\[
B=\left[
\begin{array}
[c]{c|c}%
f(A) & 0\\\hline
0 & g(A)
\end{array}
\right]
\]
with $\mathrm{rank}\,B=\mathrm{rank}\,f(A)+\mathrm{rank}\,g(A)$ and we perform
the following elementary transformations:
\[
\left[
\begin{array}
[c]{c|c}%
f(A) & 0\\\hline
0 & g(A)
\end{array}
\right]  \overset{C_{1}}{\longrightarrow}\left[
\begin{array}
[c]{c|c}%
f(A) & f(A)\cdot\varphi_{1}(A)\\\hline
0 & g(A)
\end{array}
\right]  \overset{L_{1}}{\longrightarrow}%
\]%
\[
\left[
\begin{array}
[c]{c|c}%
f(A) & D(A)\\\hline
0 & g(A)
\end{array}
\right]  \overset{L_{2}}{\longrightarrow}\left[
\begin{array}
[c]{c|c}%
f(A) & D(A)\\\hline
-M(A) & 0
\end{array}
\right]  \overset{C_{2}}{\longrightarrow}%
\]%
\[
\left[
\begin{array}
[c]{c|c}%
0 & D(A)\\\hline
-M(A) & 0
\end{array}
\right]  =C,
\]
with $\mathrm{rank}\,C=\mathrm{rank}\,D(A)+\mathrm{rank}\,M(A)$.

The matrices used for the elementary transformations on columns $(C_{1}%
,C_{2})$ and on rows $(L_{1},L_{2})$ are
\[
C_{1}= \left[
\begin{array}
[c]{c|c}%
I_{n} & \varphi_{1}(A)\\\hline
0 & I_{n}%
\end{array}
\right]  ,\quad L_{1}= \left[
\begin{array}
[c]{c|c}%
I_{n} & \varphi_{2}(A)\\\hline
0 & I_{n}%
\end{array}
\right]  ,
\]
\[
C_{2}= \left[
\begin{array}
[c]{c|c}%
I_{n} & 0\\\hline
-\psi_{2}(A) & I_{n}%
\end{array}
\right]  ,\quad L_{2}= \left[
\begin{array}
[c]{c|c}%
I_{n} & 0\\\hline
-\psi_{1}(A) & I_{n}%
\end{array}
\right],
\]
where $g(A)=\psi_{1}(A)\cdot D(A)$ and $f(A)=D(A)\cdot\psi_{2}(A)$. We used relation (\ref{eqno(2)}) in the third step and $\psi_1,\psi_2$ are  polynomials obtained by dividing the given polynomials $f,g$ to their greatest common divisor,

We obtain the relations $$C=L_{2} \cdot L_{1} \cdot B \cdot C_{1} \cdot C_{2}, \ \ M\cdot D= f\cdot g,$$
which concludes the proof.
\end{proof}
\section{Applications.}\label{sec3}

\begin{corollary}\label{cor1}$M(A)=0$ if and only if
$\mathrm{rank}\, f(A)+\mathrm{rank}\, g(A)=\mathrm{rank}\, D(A).$
\end{corollary}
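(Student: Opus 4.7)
The plan is to deduce this corollary directly from Theorem \ref{thm1}, treating it purely as an algebraic rearrangement. By the theorem, for any $A$, $f$, $g$ we have the identity
\[
\mathrm{rank}\, f(A)+\mathrm{rank}\, g(A)=\mathrm{rank}\, D(A)+\mathrm{rank}\, M(A),
\]
so subtracting $\mathrm{rank}\, D(A)$ from both sides shows that the equality
$\mathrm{rank}\, f(A)+\mathrm{rank}\, g(A)=\mathrm{rank}\, D(A)$ holds if and only if $\mathrm{rank}\, M(A)=0$.

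The only remaining point, which is essentially the sole content of the corollary beyond the theorem, is the equivalence between $\mathrm{rank}\, M(A)=0$ and $M(A)=0$. This is standard: the rank of a matrix over a field equals zero precisely when all of its entries vanish, i.e.\ when it is the zero matrix of $\mathcal{M}_n(K)$. I would state this in one sentence and be done.

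I do not anticipate any obstacle: there is no combinatorial or algebraic step to grind through beyond isolating $\mathrm{rank}\, M(A)$ in the identity of Theorem \ref{thm1} and invoking the trivial fact that a matrix of rank zero is the zero matrix. The proof therefore fits in two lines, and I would simply write the forward and backward implications side by side using the chain of equivalences above.
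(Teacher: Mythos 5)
Your argument is correct and is exactly the intended deduction: the paper states Corollary \ref{cor1} without proof, relying on the immediate rearrangement of the identity in Theorem \ref{thm1} together with the fact that a matrix has rank zero if and only if it is the zero matrix. Nothing is missing.
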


\begin{corollary}\label{cor1'} If $m_{A}$ is the minimal polynomial of the
matrix $A$ then $m_{A}|M$ if and only if $\mathrm{rank}\, f(A)+\mathrm{rank}\, g(A)=\mathrm{rank}\, D(A).$
\end{corollary}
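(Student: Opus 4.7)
The plan is to reduce this corollary directly to Corollary \ref{cor1} by invoking the defining property of the minimal polynomial. Recall that the minimal polynomial $m_A$ of $A$ is characterized by the fact that for every $p\in K[X]$, one has $p(A)=0$ if and only if $m_A\mid p$ (this is the standard consequence of the division algorithm: write $p=qm_A+r$ with $\deg r<\deg m_A$, evaluate at $A$, and use minimality of $\deg m_A$ to conclude $r=0$).

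With this in hand, I would apply the above characterization to the polynomial $M=[f,g]$ to obtain the equivalence
\[
M(A)=0 \iff m_A\mid M.
\]
This is the only new ingredient beyond Corollary \ref{cor1}.

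Then I would quote Corollary \ref{cor1}, which gives
\[
M(A)=0 \iff \mathrm{rank}\, f(A)+\mathrm{rank}\, g(A)=\mathrm{rank}\, D(A),
\]
and chain the two equivalences to obtain the desired statement
\[
m_A\mid M \iff \mathrm{rank}\, f(A)+\mathrm{rank}\, g(A)=\mathrm{rank}\, D(A).
\]

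There is no real obstacle here: the argument is a one-line composition of two standard facts. The only point requiring any care is making sure the minimal polynomial characterization is stated correctly (i.e. as an \emph{if and only if}, not merely one direction), so that both implications of the corollary follow immediately.
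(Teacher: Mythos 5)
Your proposal is correct and coincides with the paper's intended argument: the paper states Corollary \ref{cor1'} without a separate proof precisely because it reduces, exactly as you do, to Corollary \ref{cor1} via the standard equivalence $M(A)=0 \iff m_A\mid M$ given by the minimal polynomial. Nothing is missing.
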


\begin{corollary}\label{cor2} The polynomials $f$ and $g$ are coprime
polynomials if and only if for any matrix $A\in\mathcal{M}_{n}(K)$ the
following relation hold
\[
\mathrm{rank}\, f(A)\cdot g(A)+n=\mathrm{rank}\, f(A)+\mathrm{rank}\, g(A).
\]
\end{corollary}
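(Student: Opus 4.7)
The strategy is to specialize Theorem \ref{thm1} to the particular data $D=(f,g)$ and $M=[f,g]$, and let both implications fall out.

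For the forward direction, assume that $f$ and $g$ are coprime. Then $D$ is a nonzero constant, so $D(A)$ is an invertible scalar matrix of rank $n$; moreover $M=fg/D$ equals $fg$ up to a nonzero constant, giving $\mathrm{rank}\, M(A)=\mathrm{rank}\, f(A)g(A)$. Substituting both facts into the identity of Theorem \ref{thm1} yields the claimed relation.

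For the converse, I argue the contrapositive: assuming $D$ has positive degree, I produce a matrix $A$ for which the proposed identity fails. The natural candidate is any $A$ annihilated by $D$. If $\deg D\ge 2$, take $A$ to be the companion matrix of $D$, which lives in $\mathcal{M}_{\deg D}(K)$ and satisfies $D(A)=0$ by Cayley–Hamilton. If $\deg D=1$, normalize $D=X-\alpha$ and take $A=\alpha I_n$ for any $n\ge 2$. In each case $D(A)=0$, and since $D\mid M$ and $fg=DM$, it follows that $M(A)=0$ and $(fg)(A)=0$. Theorem \ref{thm1} then forces $\mathrm{rank}\, f(A)+\mathrm{rank}\, g(A)=\mathrm{rank}\, D(A)+\mathrm{rank}\, M(A)=0$, while the proposed identity would demand the same sum to equal $n+\mathrm{rank}\,(fg)(A)=n\ge 2$, a contradiction.

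The only delicate point is exhibiting a matrix $A$ of suitable size with $D(A)=0$; the split into the cases $\deg D\ge 2$ and $\deg D=1$ covers this uniformly, and if a larger $n$ is required one may take a direct sum with additional copies of $A$. Beyond this construction, every step is an immediate substitution into Theorem \ref{thm1}.
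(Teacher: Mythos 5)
Your argument is correct and is essentially the intended derivation: the paper states Corollary \ref{cor2} without a separate proof, as an immediate specialization of Theorem \ref{thm1}, and your two steps (for coprime $f,g$, note $\mathrm{rank}\,D(A)=n$ and $\mathrm{rank}\,M(A)=\mathrm{rank}\,f(A)g(A)$ and substitute; for the converse, exhibit a matrix annihilated by $D$) are the natural way to fill that in.

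One caveat concerns the quantification over the matrix size in the converse. Your patch ``take a direct sum with additional copies of $A$'' only produces matrices whose order is a multiple of $\deg D$, so it does not put a counterexample inside $\mathcal{M}_n(K)$ for the fixed $n\ge 2$ of the Introduction unless $\deg D=1$ or $\deg D\mid n$. In fact, under a strictly fixed-$n$ reading the ``only if'' direction is false in general: take $K=\mathbb{Q}$ and $f=g=p$ with $p$ irreducible of degree greater than $n$; then $p$ is coprime to the minimal polynomial of every $A\in\mathcal{M}_n(\mathbb{Q})$, hence $f(A)$ is invertible and the displayed relation reads $n+n=n+n$ for every such $A$, although $f$ and $g$ are not coprime. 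So the corollary has to be read with the size of the matrix allowed to vary, in which case a single counterexample of some order suffices --- exactly what your companion-matrix (for $\deg D\ge 2$) and scalar-matrix (for $\deg D=1$) construction provides; alternatively, for fixed $n$ one needs $D$ to possess an irreducible factor of degree at most $n$, which is automatic over an algebraically closed field or over $\mathbb{R}$. With that reading your proof is complete; the only cosmetic points are to normalize $D$ to be monic before forming its companion matrix, and to note that $D(A)=0$ already gives $f(A)=g(A)=0$ directly, so even the appeal to Theorem \ref{thm1} in that step is optional.
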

The following corollary appears in \cite{3}.
\begin{corollary}\label{cor3} If $f$ and $g$ are coprime polynomials, $\ \ $then
$\ \ \ \ f(A)\cdot g(A)=0$
if and only if $\mathrm{rank}\,f(A)+\mathrm{rank}\,g(A)=n.$
\end{corollary}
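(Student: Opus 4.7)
The plan is to derive Corollary~\ref{cor3} as a direct specialization of Theorem~\ref{thm1} (equivalently, of Corollary~\ref{cor2}), exploiting the two simplifications that coprimality produces.

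First I would unpack what coprimality of $f$ and $g$ means in the language of Theorem~\ref{thm1}. Since $D=(f,g)=1$, the constant polynomial $1$ applied to $A$ yields $D(A)=I_{n}$, so $\mathrm{rank}\,D(A)=n$. Since $M\cdot D=f\cdot g$ (as recorded at the end of the proof of Theorem~\ref{thm1}), we get $M=fg$ and hence $M(A)=f(A)g(A)$.

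Substituting these two identifications into the conclusion of Theorem~\ref{thm1} immediately yields the equation
\[
\mathrm{rank}\,f(A)+\mathrm{rank}\,g(A)=n+\mathrm{rank}\,f(A)g(A),
\]
which is exactly the relation appearing in Corollary~\ref{cor2}. From here the equivalence claimed in Corollary~\ref{cor3} is a one-line observation: a matrix is zero if and only if its rank is zero, so $f(A)g(A)=0$ iff $\mathrm{rank}\,f(A)g(A)=0$, iff (by the displayed identity) $\mathrm{rank}\,f(A)+\mathrm{rank}\,g(A)=n$.

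There is essentially no obstacle in this argument; the entire content has already been done in Theorem~\ref{thm1}, and Corollary~\ref{cor3} is only a transcription of it under the hypothesis $(f,g)=1$. The only small point to be careful about is to justify $M=fg$ from coprimality (rather than merely $M\mid fg$), which is the standard fact that $MD$ and $fg$ agree up to a unit and $D=1$ forces equality (after absorbing the unit into the choice of $M$, which does not affect $M(A)$).
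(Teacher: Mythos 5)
Your derivation is correct and is exactly the route the paper intends: Corollary~\ref{cor3} is stated without a separate proof as an immediate specialization of Theorem~\ref{thm1} (via the relation of Corollary~\ref{cor2}), with $D(A)=I_n$ and $M(A)=f(A)g(A)$ under coprimality, and the final step that a matrix vanishes iff its rank is zero. Your remark about normalizing $M$ up to a unit is a fine point but harmless, since it does not affect $M(A)$ or its rank.
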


\begin{application} (\cite{1}, \cite{2})The matrix $A\in\mathcal{M}_{n}(K)$ is
idempotent $(A^{2}=A)$ if and only if
\[
\mathrm{rank}\, A+\mathrm{rank}\, (I_{n}-A)=n.
\]
\end{application}

\begin{proof} Take the polynomials $f(x)=x$, $g(x)=1-x$ and then we have
$ D(x)=1,\quad M(x)=x-x^{2}. $
Now we apply Corollary \ref{cor1} or Corollary \ref{cor3}.
\end{proof}
\begin{application} (\cite{1}, \cite{2}) The matrix $A\in\mathcal{M}_{n}(K)$ is
involutive $(A^{2}=I_{n})$ (the field $K$ of characteristic different than 2) if and
only if
\[
\mathrm{rank}\, (I_{n}-A)+\mathrm{rank}\, (I_{n}+A)=n.
\]
\end{application}

\begin{proof} We apply Corollary \ref{cor3} for the polynomials $f(x)=1-x,\quad g(x)=1+x.$
\end{proof}

\begin{application} (\cite{2}) Using the hypothesis that the field $K$ does not have
the characteristic 2, the following statements are equivalent:
\begin{itemize}
\item[1)] The matrix $A$ is tripotent $(A^{3}=A)$.

\item[2)] $\mathrm{rank}\, A+\mathrm{rank}\, (I_{n}-A^{2})=n$.

\item[3)] $\mathrm{rank}\, (I_{n}-A)+\mathrm{rank}\, (A+A^{2})=n$.

\item[4)] $\mathrm{rank}\, A+\mathrm{rank}\, (I_{n}-A)+\mathrm{rank}\, (I_{n}+A)=2n$.
\end{itemize}

\end{application}
\begin{proof} 

$"1)\Leftrightarrow2)"$. We apply Corollary \ref{cor3} for the
polynomials $f(x)=x,\quad g(x)=1-x^{2}.$

$"1)\Leftrightarrow3")$. We apply Corollary 3 for the polynomials
$ f(x)=1-x,\quad g(x)=x+x^{2}.$

$"2)\Leftrightarrow4)"$. It is enough to prove the relation
\[
n+\mathrm{rank}\, (I_{n}-A^{2})=\mathrm{rank}\, (I_{n}-A)+\mathrm{rank}\,
(I_{n}+A),
\]
which follows from Corollary \ref{cor2} for $f(x)=1-x$ and $g(x)=1+x$.
\end{proof}
\begin{application} Let $A\in\mathcal{M}_{n}(K)$ and $f_{A}\in K[X]$ be its
characteristic polynomial, which we decompose in a product of polynomials that
are coprime two by two
\[
f_{A}=f_{1}\cdot f_{2}\cdot\ldots\cdot f_{k}.
\]
Then we have the relation
\[
\mathrm{rank}\, f_{1}(A)+\mathrm{rank}\, f_{2}(A)+\ldots+\mathrm{rank}\,
f_{k}(A)=(k-1)n.
\]
\end{application}

\begin{proof} We apply Corollary \ref{cor3} for $f=f_{1}\cdot f_{2}\cdot\ldots\cdot
f_{k-1}$, $g=f_{k}$ and we obtain
\[
\mathrm{rank}\, f(A)+\mathrm{rank}\, f_{k}(A)=n.
\]
We apply now Corollary \ref{cor2} for $f=f_{1}\cdot f_{2}\cdot\ldots\cdot f_{k-2}$,
$g=f_{k-1}$ and we obtain
\[
\mathrm{rank}\, (f_{1}\cdot f_{2}\cdot\ldots\cdot f_{k-2}) +\mathrm{rank}\,
f_{k-1}(A)+\mathrm{rank}\, f_{k}(A)=2n.
\]
By induction, we obtain the result.
\end{proof}
\begin{application} For any matrix $A\in\mathcal{M}_{n}(K)$ the following
relation is verified
\[
\mathrm{rank}\, (A+A^{2})+\mathrm{rank}\, (A-A^{2})=\mathrm{rank}\,
A+\mathrm{rank}\, (A-A^{3}).
\]
\end{application}

\begin{proof} We apply Theorem \ref{thm1} with
\[
f(x)=x+x^{2},\quad g(x)=x-x^{2},\quad D(x)=x,\quad M(x)=x-x^{3}.
\]
\end{proof}

\begin{application} (\cite{2}) For a matrix $A\in\mathcal{M}_{n}(K)$ the following
statements are equivalent:
\begin{itemize}
\item[1)] $A^{3}=A^{5}.$

\item[2)] $\mathrm{rank}\, A^{3}+\mathrm{rank}\, (I_{n}-A^{2})=n.$

\item[3)] $\mathrm{rank}\, (I_{n}-A)+\mathrm{rank}\, (A^{3}+A^{4})=n.$

\item[4)] $\mathrm{rank}\, (I_{n}+A)+\mathrm{rank}\, (A^{3}-A^{4})=n.$

\item[5)] $\mathrm{rank}\, A^{3}+\mathrm{rank}\, (I_{n}-A)+\mathrm{rank}\,
(I_{n}+A)=2n.$

\item[6)] $\mathrm{rank}\, (A-A^{2})+\mathrm{rank}\, (A^{3}+A^{4})=\mathrm{rank}\, A.$

\item[7)] $\mathrm{rank}\, (A+A^{2})+\mathrm{rank}\, (A^{3}-A^{4})=\mathrm{rank}\, A.$

\item[8)] $\mathrm{rank}\, (A^{3}+A^{4})+\mathrm{rank}\, (A^{3}-A^{4})=\mathrm{rank}%
\, A^{3}$.
\end{itemize}

\end{application}
 
\end{document}